\documentclass[12pt]{amsart}

\usepackage{amsmath}
\usepackage{amsthm}
\usepackage{amssymb}
\usepackage{amsfonts}
\usepackage{bm}
\usepackage[shortlabels]{enumitem}
\usepackage[bookmarks=true,hyperindex,pdftex,colorlinks,citecolor=red, linkcolor=blue]{hyperref}
\usepackage{centernot}
\usepackage{marginnote}
\usepackage[left=2.5cm,right=2.5cm,top=2cm,bottom=2cm]{geometry}
\usepackage{bbm}
\usepackage{xcolor}

\theoremstyle{plain}
\newtheorem{thm}{Theorem}[section]

\newtheorem{cor}[thm]{Corollary}
\newtheorem{remark}[thm]{Remark}

\theoremstyle{definition}

\newtheorem{q}[thm]{Question}

\newcommand{\R}{\mathbb{R}}
\newcommand{\norm}[1]{\left\Vert #1\right\Vert}

\begin{document}
	
	\title{Stability of Individually Eventually Positive Semigroups on $L^p$-Spaces}
	\author[L. Arnold]{Loris Arnold}

	\address[L. Arnold]{Normandie Univ, UNICAEN, CNRS, LMNO, 14000 Caen, France}
	\email{lfj.arld@gmail.com}
	\subjclass[2020]{47D06, 47B65, 46G10, 47A10}
	\keywords{Kreiss bounded semigroup, resolvent estimate,
		growth bound, Hilbert space}
	
	\keywords{eventually positive semigroup, $C_0$-semigroup, stability, spectral bound, growth bound, $L^p$-spaces}
	
	\maketitle
	
	\begin{abstract}
		We answer a question raised by Vogt by proving that the growth bound of an individually eventually positive $C_0$-semigroup on an $L^p$-space ($1 < p < \infty$) coincides with the spectral bound of its generator. Our proof follows the general strategy of Vogt's argument for the uniformly eventually positive case. The key new ingredient is a variant of an operator-range uniformisation
		principle of Arora and Gl\"uck, adapted here to time-dependent operator ranges. When applied to principal ideals, this principle turns individual eventual positivity into the uniform domination estimate required for Vogt's stability argument.
	\end{abstract}
	
	\section{Introduction}
	
	Let $(T_t)_{t\ge 0}$ be a $C_0$-semigroup on a complex Banach lattice $E$
	with generator $A$. The semigroup is said to be \emph{individually
		eventually positive} if for every $f\in E_+$ there exists $t_f\ge 0$ such
	that
	\[
	T_tf\ge 0 \qquad (t\ge t_f).
	\]
	This property is substantially weaker than \emph{uniform eventual
		positivity}, where $t_f$ can be chosen independently of $f$.

	Lutz Weis proved that $\omega_0(T)=s(A)$ for positive $C_0$-semigroups on $L^p$ in \cite{Weis95} (see also \cite{Weis98}), while Vogt \cite{Vogt} extended this result to uniformly eventually positive $C_0$-semigroups on $L^p$-spaces via a short duality argument.
	
	The main difficulty in extending this argument to individually eventually
	positive semigroups is the lack of a common positivity time. Although each
	positive orbit eventually becomes positive, the corresponding time may
	depend on the initial vector. The difficulty is that Vogt's duality argument needs a uniform domination estimate, whereas individual eventual positivity only provides pointwise eventual information.
	
	The purpose of this note is to show that this lack of uniformity can be
	overcome.
	
	\begin{thm}\label{thmMain}
		Let $1<p<\infty$ and let $(T_t)_{t\ge 0}$ be an individually eventually
		positive $C_0$-semigroup on $L^p(\Omega,\mu)$ with generator $A$. Then
		\[
		\omega_0(T)=s(A).
		\]
	\end{thm}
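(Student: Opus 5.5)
Since always $s(A)\le\omega_0(T)$, it suffices to show $\omega_0(T)\le s(A)$. After rescaling $T_t$ to $e^{-\lambda t}T_t$ with $\lambda>s(A)$, we may assume $s(A)<0$ and have to prove $\omega_0(T)<0$. I would follow Vogt's duality strategy. On $L^p$ with $1<p<\infty$, the Weis-type argument gives the following: if $R(\lambda,A)$ is bounded on the right half-plane and the orbit maps $t\mapsto T_tf$ and $t\mapsto T_t^*g$ are controlled in $L^p$ and $L^{p'}$ in time, then via the Laplace transform $\int_0^\infty |\langle T_tf,g\rangle|\,dt<\infty$ for all $f,g$. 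Datko--Pazy-type arguments (uniform boundedness) then give exponential stability. In the positive case the key fact is that for $0\le f$, $\int_0^\infty T_tf\,dt=R(0,A)f$ converges with a positive integrand. This holds because $s(A)<0$ and, for positive semigroups, the resolvent is the Laplace transform on $\{\mathrm{Re}\lambda>s(A)\}$. Domination $|T_tf|\le T_t|f|$ then carries this over to general $f$.

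In the individually eventually positive setting I would first prove that for each $f\ge0$ the Laplace integral $\int_{t_f}^\infty e^{-\lambda t}T_tf\,dt$ converges for $\mathrm{Re}\lambda>s(A)$. The integrand $T_{t}f$ is positive for $t\ge t_f$, so Pringsheim/Landau-type arguments for vector-valued Laplace transforms of positive functions apply. Since $T_{t_f}f\ge 0$ is an initial value whose orbit stays positive, the abscissa of convergence is a singularity of the resolvent applied to that vector, and so is $\le s(A)$. This yields $\int_0^\infty T_tf\,dt$ convergent for every $f\ge0$ when $s(A)<0$.

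The main obstacle is the lack of uniform domination. Vogt needs an estimate of the form $|T_tf|\le C\,T_t|f|$, or at least positivity of $T_t$ on a fixed large time set for all $f$, and individual eventual positivity only gives this pointwise. Here I would use the operator-range uniformisation principle in the time-dependent variant mentioned in the abstract. Fix $u\ge0$ and consider the principal ideal $E_u$ with its gauge norm. For each $n$ let $F_n$ be the set of $f\in E_u$ such that $T_tf\in$ the ideal generated by $T_t u$, with domination $|T_tf|\le n\,T_tu$, for all $t\ge n$. Because $|f|\le\|f\|_u u$ and the positive cone of $E_u$ is eventually mapped into the cone, each $f\in E_u$ lies in some $F_n$. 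A Baire category argument on the Banach space $E_u$, using closedness of the $F_n$, which follows from continuity of $T_t$ and closedness of the cone, gives $n_0$ and a uniform bound $|T_tf|\le C\|f\|_u\,T_tu$ for $t\ge t_0(u)$. Here I would apply it to $f$ and $\|f\|_u u\pm\mathrm{Re}/\mathrm{Im}$ parts and use linearity.

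With this domination, for every $f\in E_u$ we get $\int_{t_0}^\infty|T_tf|\,dt\le C\|f\|_u\int_{t_0}^\infty T_tu\,dt$. Since $L^p$ is the union of principal ideals and is order continuous, taking $u=|f|$ shows $\int_0^\infty |T_tf|\,dt\in L^p$ for every $f$, with a bound that is finite for each $f$. Running the same argument for the adjoint semigroup on $L^{p'}$ requires checking that it is also individually eventually positive. Weak$^*$-closedness of the cone and a duality/Baire step would be needed there, and I expect this to be delicate. The pairing $\langle\int|T_tf|,|g|\rangle$ then feeds Vogt's argument, which shows that $t\mapsto\langle T_tf,g\rangle\in L^1$ for all $f,g$. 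The uniform boundedness principle gives a bilinear bound, and the standard Datko--Pazy-type conclusion yields $\omega_0(T)<0$.
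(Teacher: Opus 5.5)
\textbf{There is a genuine gap at the end of your argument.}

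What your proposal actually establishes is $\int_0^\infty|T_tf|\,dt\in L^p(\Omega)$. Hence $\int_0^\infty|\langle T_tf,g\rangle|\,dt<\infty$ for all $f\in L^p$ and $g\in L^{p'}$, i.e.\ weak $L^1$-stability. No Datko--Pazy-type theorem turns this into $\omega_0(T)<0$: Datko's theorem needs the strong estimate $\int_0^\infty\norm{T_tf}_p^p\,dt<\infty$.

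In fact every positive semigroup on a Banach lattice with $s(A)<0$ is weakly $L^1$-stable, because $|\langle T_tf,g\rangle|\le\langle T_t|f|,|g|\rangle$ and $\int_0^\infty T_t|f|\,dt=R(0,A)|f|$. Yet there are positive semigroups with $s(A)<\omega_0(T)$, e.g.\ Arendt's example on $L^p\cap L^q$. Your argument uses nothing specific to $L^p$, so it cannot be complete.

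Two further points. First, your uniformisation, applied with $u=|f|$, gives nothing beyond individual eventual positivity itself: writing $f=f_1-f_2+i(f_3-f_4)$ with $f_j\ge0$, one already has $|T_tf|\le\sum_jT_tf_j$ for large $t$. Second, the detour through the adjoint semigroup, whose individual eventual positivity you leave open, is not needed at all.

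\textbf{The missing idea is where uniformity is really required: over the test functions $h$ in Vogt's lemma.} For $f\ge0$ with $T_tf\ge0$ on $[b,R]$, one has the lattice identity $\bigl(\int_b^R|T_tf|^p\,dt\bigr)^{1/p}=\sup\{\int_b^Rh(t)T_tf\,dt:\ 0\le h,\ \norm{h}_{p'}\le1\}$ in $L^p(\Omega)$. So it suffices to dominate all these averages by a single $H\in L^p(\Omega)_+$. Taking norms then gives $\int_b^R\norm{T_tf}_p^p\,dt\le\norm{H}_p^p$, and Datko applies.

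The paper obtains $H$ as follows.
\begin{enumerate}[label=(\roman*)]
\item It rewrites $\int_b^Rh(t)T_tf\,dt=\int_\tau^\infty T_uJ(h_u)\,du$, where $h_u=h(u+\cdot)$ and $Jh=\int_a^{a+1}h(r)T_rf\,dr$ on $L^{p'}([a,a+1];\R)$.
\item By H\"older, the range of $J$ lies in the ideal generated by $F=\bigl(\int_a^{a+1}|T_rf|^p\,dr\bigr)^{1/p}$, with $\norm{Jh}_F\le\norm{h}_{p'}$.
\item Uniformisation then yields $|T_uJh|\le C\norm{h}_{p'}T_uF$ for $u\ge\tau$, uniformly in $h$.
\item Hence $H=C\int_\tau^\infty T_uF\,du$ works; this integral exists by the Laplace-transform result from your second paragraph.
\end{enumerate}

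Your Baire lemma on $E_u$ is sound. You should add that the sets $F_n$ are absolutely convex, so that some $F_n$ contains a ball around $0$. With that, it would actually suffice here if applied with $u=F$ instead of $u=|f|$. What is missing is this choice of ideal, the Fubini/semigroup rearrangement, and Vogt's lemma converting the uniform domination into strong $L^p$-integrability in time.
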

	
	The key observation is that one does not need to recover uniform eventual
	positivity of the semigroup on the whole space. It is enough to uniformise
	a suitable eventual order estimate on an operator range. Using a variant of
	the operator-range principle of Arora and Gl\"uck
	\cite[Theorem 2.6]{AroraGlueck2024}, we show that, for suitable bounded real
	linear operators
	\[
	J\colon Y\longrightarrow E_{\mathbb R}
	\]
	whose range is contained in a principal ideal $E_g$, a pointwise eventual
	estimate of the form
	\[
	|T_tJy|\lesssim \norm{y}_Y T_tg
	\]
	can be made uniform in $y$. There exist $t_0\ge 0$ and $C\ge 0$ such that
	\begin{equation}\label{eqDom}
		|T_tJy|
		\le
		C\norm{y}_Y T_tg
		\qquad
		(t\ge t_0,\ y\in Y).
	\end{equation}
	Applying this principle to local time averages of a single orbit yields the
	uniform domination estimate needed to carry out Vogt's $L^p$-duality
	argument and then apply Datko's theorem.
	
	\subsection*{Notation and terminology:}
	For Banach spaces $X$ and $Y$, we denote by
	\[
	B(X,Y)
	\]
	the Banach space of bounded linear operators from $X$ to $Y$, endowed with
	the operator norm, and we write
	\[
	B(X):=B(X,X).
	\] If $E$ is a complex Banach lattice, we denote by $E_{\mathbb R}$
	its real part and by $E_+$ its positive cone. For $f\in E_{\mathbb R}$,
	we write $f\geq0$ if $f\in E_+$, and $|f|$ denotes the lattice modulus of
	$f$. The same notation $|\cdot|$ is used for the modulus in the
	complexification of $E$.
	
	For $g\in E_+$, the principal ideal generated by $g$ is
	\[
	E_g
	:=
	\{f\in E:\ |f|\leq c g
	\text{ for some }c\geq0\}.
	\]
	Equipped with the gauge norm
	\[
	\norm{f}_g
	:=
	\inf\{c\geq0:\ |f|\leq c g\},
	\qquad f\in E_g,
	\]
	the space $E_g$ is itself a Banach lattice, and the canonical embedding
	$E_g\hookrightarrow E$ is continuous.
	
	Moreover, $(\Omega,\Sigma,\mu)$ denotes a measure space.
	For $1\leq p<\infty$, the space $L^p(\Omega,\mu)$ is understood to consist
	of equivalence classes of complex-valued measurable functions, while $
	L^p(\Omega,\mu;\mathbb R)$ denotes its real part. Its positive cone is
	\[
	L^p(\Omega,\mu)_+
	=
	\{f\in L^p(\Omega,\mu;\mathbb R): f\geq0 \text{ a.e.}\}.
	\]
	As usual, inequalities between elements of $L^p(\Omega,\mu)$ are understood
	in the almost everywhere sense.
	
	If $(T_t)_{t\geq0}$ is a $C_0$-semigroup with generator $A$, we denote by
	\[
	s(A)
	:=
	\sup\{\operatorname{Re}\lambda:\lambda\in\sigma(A)\}
	\]
	the spectral bound of $A$, and by
	\[
	\omega_0(T)
	:=
	\inf\left\{
	\omega\in\mathbb R:
	\text{there exists }M\geq1\text{ such that }
	\norm{T_t}\leq Me^{\omega t}
	\text{ for all }t\geq0
	\right\}
	\]
	the growth bound of the semigroup.
	
	\subsection*{Organization of the paper}The paper is organised as follows. In Section 2, we prove the
	uniformisation theorem for varying operator ranges and apply it to principal
	ideals. In Section 3, we prove the main theorem and conclude with a remark presenting a new proof for AM-spaces.
	
	\section{A uniformisation principle for time-dependent operator ranges}
	
	We first present the operator-range principle that will be used below; an analogous statement can be found in \cite[Theorem 2.6]{AroraGlueck2024}.
	
	\begin{thm}
		\label{thmRange}
		Let $E$ and $F$ be Banach spaces, and let $(S_i)_{i\in I}$ be a net of bounded linear operators from $E$ to $F$. Assume that the directed set $I$ contains a countable majorizing subset. For each $i\in I$, let $W_i\subseteq F$ be a vector subspace endowed with a complete norm $\norm{\cdot}_{W_i}$ such that the inclusion $W_i\hookrightarrow F$ is continuous (that is, $W_i$ is an operator range in $F$). Then the following statements are equivalent.
		\begin{enumerate}[label=\upshape(\alph*)]
			\item\label{itUnif}
			There exist $M_0\ge 0$ and $i_0\in I$ such that, for every $i\ge i_0$,
			\[
			S_i E \subseteq W_i
			\qquad\text{and}\qquad
			\norm{S_i}_{B(E, W_i)} \le M_0.
			\]
			
			\item\label{itIndiv}
			There exists $M_1\ge 0$ such that for every $x\in E$ there exists $i_x\in I$ for which
			\[
			S_i x \in W_i
			\qquad\text{and}\qquad
			\norm{S_i x}_{W_i} \le M_1\norm{x}_E
			\]
			for every $i\ge i_x$.
		\end{enumerate}
	\end{thm}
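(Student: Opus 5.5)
(a)$\Rightarrow$(b) is immediate: take $M_1:=M_0$ and $i_x:=i_0$ for every $x$. All the work is in (b)$\Rightarrow$(a), and the plan is a Baire category argument on $E$ followed by a linearity/translation step, in the spirit of the Arora--Gl\"uck argument.

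\textbf{Step 1: closed sets indexed by the countable majorizing set.} Let $\{j_n:n\in\mathbb N\}\subseteq I$ be majorizing, i.e.\ every $i\in I$ has some $j_n\ge i$. For $n,k\in\mathbb N$ define
\[
C_{n,k}:=\bigl\{x\in E:\ S_ix\in W_i \text{ and } \norm{S_ix}_{W_i}\le k \text{ for all } i\ge j_n\bigr\}.
\]
By (b), for every $x$ we can pick $n$ with $j_n\ge i_x$ and $k\ge M_1\norm{x}_E$; hence $E=\bigcup_{n,k}C_{n,k}$. Each $C_{n,k}$ is closed. To see this, let $x_m\to x$ with $x_m\in C_{n,k}$ and fix $i\ge j_n$. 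The sequence $(S_ix_m)$ lies in the closed ball of radius $k$ of $W_i$ and converges to $S_ix$ in $F$. Since $W_i$ is an operator range (the image of a Banach space under an injective bounded map), its closed unit ball is closed in $F$. I would verify this via the standard quotient/bidual argument, or more simply by viewing $W_i$ as $\operatorname{ran}$ of an injective bounded operator, noting that the norm is lower semicontinuous for the $F$-topology, and adjusting the norm by an equivalent one if necessary. This lower-semicontinuity point is where I expect the only genuine subtlety of the proof. If it fails for an arbitrary complete norm, I would first replace $\norm{\cdot}_{W_i}$ by the equivalent norm making $W_i$ isometric to $X_i/\ker$ for the corresponding map, and track the constants. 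Granting it, $S_ix\in W_i$ with $\norm{S_ix}_{W_i}\le k$, so $C_{n,k}$ is closed.

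\textbf{Step 2: Baire and uniformisation.} By Baire's theorem some $C_{n,k}$ contains a ball $B(x_0,r)$. The sets $C_{n,k}$ are symmetric, and for $x,y\in C_{n,k}$ we have $x-y$ in the analogous set with bound $2k$, because $W_i$ is a vector space. Hence every $z$ with $\norm{z}<r$ satisfies $S_iz\in W_i$ and $\norm{S_iz}_{W_i}\le 2k$ for all $i\ge j_n$, using $z=(x_0+z)-x_0$. By scaling, $S_iE\subseteq W_i$ and $\norm{S_i}_{B(E,W_i)}\le 2k/r$ for all $i\ge i_0:=j_n$. This gives (a) with $M_0=2k/r$. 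Note that $M_1$ itself is not recovered as $M_0$; only existence of a uniform bound is claimed, so this suffices.
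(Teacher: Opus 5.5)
There is a genuine gap in Step~1. Your claim that each $C_{n,k}$ is closed rests on the unit ball of an operator range being closed in $F$, and that is false in general; no renorming repairs it. Take $E=F=c_0$ and $W:=\{y\in c_0:\ (ny_n)_n\in c_0\}$ with $\norm{y}_W:=\sup_n n|y_n|$. This is a complete norm dominating $\norm{\cdot}_\infty$. The vectors $y^{(m)}:=(1,\tfrac12,\dots,\tfrac1m,0,0,\dots)$ lie in the unit ball of $W$ and converge in $c_0$ to $(\tfrac1n)_n\notin W$.

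By the closed graph theorem, any two complete norms on $W$ that are continuous into $F$ are equivalent. So every admissible ball contains $cB_W$ for some $c>0$, and hence is not closed in $F$ either. Your fallback to the quotient norm of $X_i/\ker$ changes nothing here, since $W$ is already isometric to $c_0$.

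This failure occurs under the hypotheses of the theorem. With $I=\mathbb N$, $S_1=\mathrm{id}$, $W_1=W$, and $S_i=0$, $W_i=F$ for $i\ge 2$, statement (b) holds, but $C_{1,k}=kB_W$ is not closed in $E$ for $k\ge 1$. So Baire's theorem only tells you that some \emph{closure} $\overline{C_{n,k}}$ has interior. For the principal ideals in Corollary~\ref{corDom} the balls are order intervals, hence closed, so your argument does cover that application, but not the theorem as stated.

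The missing ingredient is the completeness of the $W_i$, which your argument never uses. Apply Baire to $E=\bigcup_{n,k}\overline{C_{n,k}}$. Your symmetrisation, passed to closures, then gives $B(0,r)\subseteq\overline{C_{n,2k}}$. Now run the open-mapping iteration: for $\norm{x}_E<r$, choose $x_m\in 2^{1-m}C_{n,2k}$ with $\norm{x-\sum_{l\le m}x_l}_E<2^{-m}r$. For each $i\ge j_n$, the series $\sum_m S_ix_m$ converges absolutely in $W_i$, with $\sum_m\norm{S_ix_m}_{W_i}\le 4k$. Its $W_i$-limit equals its $F$-limit $S_ix$ because $W_i\hookrightarrow F$ is continuous. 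Hence $B(0,r)\subseteq C_{n,4k}$, and (a) holds with $M_0=4k/r$.

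The paper builds this completeness in differently. It turns $V_i=\{x:\ S_jx\in W_j\ (j\ge i),\ \sup_{j\ge i}\norm{S_jx}_{W_j}<\infty\}$ into an operator range via the norm $\norm{x}_E+\sup_{j\ge i}\norm{S_jx}_{W_j}$. It then uses the Arora--Gl\"uck fact that countably many operator ranges can cover $E$ only if one of them equals $E$, and finishes with the open mapping theorem.
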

	
	\begin{proof}
		The proof is essentially the same as that of \cite[Theorem 2.6]{AroraGlueck2024}. For the sake of completeness, we give a short proof. The implication \ref{itUnif}$\Rightarrow$\ref{itIndiv} is immediate.
		
		Conversely, let $I_0\subseteq I$ be a countable majorizing subset. For $i\in I_0$, set
		\[
		V_i := \left\{ x\in E \mid S_j x\in W_j \text{ for all } j\ge i \text{ and } \sup_{j\ge i}\norm{S_j x}_{W_j}<\infty \right\}
		\]
		and endow $V_i$ with the norm
		\[
		\norm{x}_{V_i} := \norm{x}_E + \sup_{j\ge i}\norm{S_j x}_{W_j}.
		\]
		
		A standard completeness argument shows that $V_i$ is an operator range in $E$. Indeed, if $(x_n)$ is Cauchy in $V_i$, then it converges in $E$, while for each $j\ge i$ the sequence $(S_j x_n)$ converges in $W_j$. The continuity of the embedding $W_j\hookrightarrow F$ and the boundedness of $S_j\colon E\to F$ identify this limit with $S_j x$.
		
		Assumption \ref{itIndiv} and the majorizing property of $I_0$ show that
		\[
		E = \bigcup_{i\in I_0} V_i.
		\]
		By \cite[Proposition 2.6]{AroraGlueck2023}, there exists $i_0\in I_0$ such that 
		\[
		V_{i_0} = E.
		\]
		The open mapping theorem then yields a constant $C\ge 0$ such that for every $x\in E$
		\[
		\sup_{j\ge i_0}\norm{S_j x}_{W_j} \le C\norm{x}_E.
		\]
		This is precisely \ref{itUnif}.
	\end{proof}
	
	For $g\in E_+$, recall that the principal ideal generated by $g$ is
	\[
	E_g := \{x\in E \mid |x|\le c g \text{ for some } c\ge 0\},
	\]
	endowed with the gauge norm
	\[
	\norm{x}_g := \inf\{c\ge 0 \mid |x|\le c g\}.
	\]
	Then $(E_g,\norm{\cdot}_g)$ is a complex Banach lattice and the inclusion $E_g\hookrightarrow E$ is continuous.
	
	\begin{cor}\label{corDom}
		Let $(T_t)_{t\ge 0}$ be an individually eventually positive $C_0$-semigroup on a Banach lattice $E$. Let $g\in E_+$, let $Y$ be a real Banach space, and let $J\colon Y\longrightarrow E_{\R}$ be a bounded linear operator such that $JY\subseteq E_g$. Then there exist $t_0\ge 0$ and $C\ge 0$ such that for all $t \ge t_0$ and $y\in Y$,
		\begin{equation}
			\label{eqUdom}
			|T_t Jy| \le C\norm{y}_Y T_t g.
		\end{equation}
		In particular, $T_t g \ge 0$ for all $t\ge t_0$.
	\end{cor}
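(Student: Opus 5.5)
Our plan is to apply Theorem~\ref{thmRange} with the directed set $I=[0,\infty)$ (countable majorizing subset $\mathbb N$), the Banach space $Y$ in place of $E$, the space $E$ (viewed as a real Banach space, or its real part $E_{\R}$) in place of $F$, the operators $S_t:=T_tJ\colon Y\to E$, and the time-dependent operator ranges
\[
W_t:=E_{T_tg}=\{f\in E:\ |f|\le c\,T_tg \text{ for some } c\ge 0\},\qquad \norm{f}_{W_t}:=\inf\{c\ge 0:\ |f|\le c\,T_tg\}.
\]
First we check that each $W_t$ is an operator range. Once $t$ is so large that $T_tg\ge 0$, $W_t$ is the principal ideal generated by the positive element $T_tg$, hence a Banach space continuously embedded in $E$ (with $\norm{f}\le \norm{f}_{W_t}\norm{T_tg}$). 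For the finitely many small times where $T_tg$ might fail to be positive, we set $W_t:=\{0\}$ with the zero norm. This is harmless, because both conditions in Theorem~\ref{thmRange} only concern sufficiently large indices. Alternatively, one can define $W_t$ via $|f|\le c\,|T_tg|$; it does not matter, since only large $t$ enter.

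The main step is to verify condition~\ref{itIndiv}. Fix $y\in Y$ and put $f:=Jy\in E_g\cap E_{\R}$. Choose $c:=\norm{f}_g$, so that $|f|\le c g$; since $J$ is bounded into $E_g$ (see below), we have $c\le K\norm{y}_Y$. Then $c g\pm f\ge 0$, and also $g\ge 0$. By individual eventual positivity there is $t_y$ such that
\[
T_t(cg+f)\ge 0,\qquad T_t(cg-f)\ge 0,\qquad T_tg\ge 0\qquad (t\ge t_y).
\]
Since $T_tf$ is real (it is obtained as a difference of positive elements), these inequalities give $|T_tf|\le c\,T_tg$, that is, $\norm{S_ty}_{W_t}\le K\norm{y}_Y$ for all $t\ge t_y$. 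This is~\ref{itIndiv} with $M_1=K$.

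The hoped-for obstacle is obtaining the constant $K$, i.e.\ showing that $J\colon Y\to E_g$ is bounded for the gauge norm, and not merely for the norm of $E$. We plan to prove this with the closed graph theorem. Both $Y$ and $E_g$ are Banach spaces, and the inclusion $E_g\hookrightarrow E$ is continuous. So if $y_n\to y$ in $Y$ and $Jy_n\to h$ in $E_g$, then $Jy_n\to Jy$ and $Jy_n\to h$ in $E$, whence $h=Jy$; the graph is therefore closed. With~\ref{itIndiv} established, Theorem~\ref{thmRange} provides $t_0$ (which we may enlarge past the time where $T_tg\ge0$) and $M_0$ such that $\norm{T_tJy}_{W_t}\le M_0\norm{y}_Y$ for $t\ge t_0$. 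This is exactly~\eqref{eqUdom} with $C=M_0$.

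Finally, the additional claim that $T_tg\ge 0$ for $t\ge t_0$ follows directly from the individual eventual positivity applied to $g$, after enlarging $t_0$ if necessary.
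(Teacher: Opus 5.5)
Your proposal is correct and is essentially the paper's proof. The closed graph theorem makes $J\colon Y\to (E_g)_{\R}$ bounded, the positivity of $M\norm{y}_Y g\pm Jy$ combined with individual eventual positivity verifies condition~\ref{itIndiv} for the ranges $W_t=E_{T_tg}$, and Theorem~\ref{thmRange} then gives the uniform estimate. The only difference is cosmetic: you index over $[0,\infty)$ and set $W_t=\{0\}$ at small times, whereas the paper restricts the net to $[t_g,\infty)$. Those small times form an interval rather than ``finitely many'' times, but this does not affect your argument.
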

	
	\begin{proof}
		By the closed graph theorem, $J\colon Y \longrightarrow (E_g)_{\R}$ is automatically bounded. We set $M := \norm{J}_{B(Y, E_g)}$ and choose $t_g\ge 0$ such that $T_t g \ge 0$ for all $t\ge t_g$.
		
		For $t\ge t_g$, we define the operator range $W_t := E_{T_t g}$ equipped with its gauge norm. Fix $y\in Y$. Since $|Jy|\le M\norm{y}_Y g$, the two vectors
		\[
		M\norm{y}_Y g + Jy \qquad \text{and} \qquad M\norm{y}_Y g - Jy
		\]
		belong to $E_+$. By individual eventual positivity, there exists $t_y \ge t_g$ such that $T_t(M\norm{y}_Y g + Jy)$ and $T_t(M\norm{y}_Y g - Jy)$ are positive for every $t\ge t_y$. Hence, for every $t \ge t_y$,
		\[
		-M\norm{y}_Y T_t g \le T_t Jy \le M\norm{y}_Y T_t g.
		\]
		That is, $|T_t Jy| \le M\norm{y}_Y T_t g.$ Equivalently, for every $t \ge t_y$,
		\[
		T_t Jy \in W_t \qquad \text{and} \qquad \norm{T_t Jy}_{W_t} \le M\norm{y}_Y.
		\]
		
		We may now apply Theorem \ref{thmRange} to the family of operators $(T_t J)_{t\ge t_g}$ from $Y$ to $E$ and the family of operator ranges $(W_t)_{t\ge t_g}$ in $E$. Since $[t_g,\infty)$ contains a countable majorizing subset, the desired uniform time $t_0$ and constant $C$ follow.
	\end{proof}
	
	\section{Proof of the main theorem}
	
	We now apply the preceding domination principle to Vogt's argument.
	
	\begin{proof}[Proof of Theorem \ref{thmMain}]
		
		Since $s(A) \le \omega_0(T)$ holds universally, applying a shift $e^{-\lambda t}T_t$ for $\lambda > s(A)$ reduces the claim to proving that
		\[
		s(A) < 0 \implies \omega_0(T) < 0.
		\]
		Assume $s(A) < 0$ and fix $f \in L^p(\Omega)_+$. By individual eventual positivity, $T_t f \ge 0$ for all $t \ge a$ for some $a \ge 0$. Setting $p' = \frac{p}{p-1}$ and $Y := L^{p'}([a,a+1]; \mathbb{R})$, we define $J \colon Y \to L^p(\Omega; \mathbb{R})$ by
		\[
		Jh := \int_a^{a+1} h(u) T_u f \, du.
		\]
		The function $F := \left( \int_a^{a+1} |T_u f|^p \, du \right)^{1/p}$ belongs to $L^p(\Omega)_+$ since, by Tonelli's theorem, $\norm{F}_p^p = \int_a^{a+1} \norm{T_u f}_p^p \, du < \infty$.
		Moreover, H\"older's inequality gives, for every $h\in Y$,
		\begin{equation}
			\label{eqJf}
			|Jh|
			\le
			\norm{h}_{p'}F.
		\end{equation}
		Thus
		\[
		JY\subseteq E_F.
		\]
		
		By Corollary \ref{corDom}, there exist $\tau \ge a$ and $C \ge 0$ such that, for all $u \ge \tau$ and $h \in Y$,
		\begin{equation}
			\label{eqMdom}
			|T_u Jh| \le C \norm{h}_{p'} T_u F \qquad \text{and} \qquad T_u F \ge 0.
		\end{equation}
		Set $b := a + \tau + 1$. Fix $R > b$ and $0 \le h \in L^{p'}([b,R])$ with $\norm{h}_{p'} \le 1$, extended by zero to $\mathbb{R}$. For $u \ge \tau$, the function $h_u [a,a+1] \rightarrow \R$ defined by $h_u(r) := h(u+r)$ satisfies $h_u \in Y$ and $\norm{h_u}_{p'} \le 1$.
		By Fubini's theorem and the semigroup property,
		\begin{align}
			\int_b^R h(t)T_tf\,dt
			&=
			\int_b^R
			\left(
			\int_{t-a-1}^{t-a}du
			\right)
			h(t)T_tf\,dt
			\notag\\
			&=
			\int_\tau^{R-a}
			\int_{a+u}^{a+u+1}
			h(t)T_tf\,dt\,du
			\notag\\
			&=
			\int_\tau^{R-a}
			T_u
			\left(
			\int_a^{a+1}
			h(u+r)T_rf\,dr
			\right)
			du
			\notag\\
			&=
			\int_\tau^{\infty}
			T_uJ(h_u)\,du.
			\label{eqFub}
		\end{align}
		The extension of $h$ by zero allowed us to replace
		$R-a$ by $\infty$ in the last integral.
		It follows from \eqref{eqMdom} that
		\begin{align}\label{eqAvg}
			\left|
			\int_b^R h(t)T_tf\,dt
			\right|
			&\le
			\int_\tau^{\infty}|T_uJ(h_u)|\,du
			\le
			C\int_\tau^\infty T_uF\,du.
		\end{align}
		
		Since $s(A)<0$, \cite[Proposition 7.1]{DanersGlueckKennedy},
		applied at $\lambda=0$, shows that
		\[
		\int_0^\infty T_uF\,du
		=
		R(0,A)F
		\]
		exists as an improper Riemann integral in $L^p(\Omega)$.
		
		Hence
		\[
		H
		:=
		C\int_\tau^\infty T_uF\,du
		\in L^p(\Omega)_+.
		\]
		Since for each $t \in [b,R]$, $T_tf\ge 0$, \eqref{eqAvg} yields
		\begin{equation}
			\label{eqPavg}
			0
			\le
			\int_b^R h(t)T_tf\,dt
			\le H.
		\end{equation}
		
		Applying \cite[Lemma 1]{Vogt}, we obtain
		\[
		\left(
		\int_b^R|T_tf|^p\,dt
		\right)^{1/p}
		=
		\sup_{\substack{
				0\le h\in L_+^{p'}([b,R])\\
				\norm{h}_{p'}\le 1
		}}
		\int_b^R h(t)T_tf\,dt.
		\]
		Consequently, \eqref{eqPavg} implies
		\[
		\left(
		\int_b^R|T_tf|^p\,dt
		\right)^{1/p}
		\le H.
		\]
		Taking $L^p(\Omega)$-norms gives
		\[
		\int_b^R\norm{T_tf}_p^p\,dt
		\le
		\norm{H}_p^p.
		\]
		Letting $R\to\infty$, we obtain
		\begin{equation}
			\label{eqInteg}
			\int_0^\infty\norm{T_tf}_p^p\,dt
			<\infty
			\qquad
			(f\in L^p(\Omega)_+).
		\end{equation}
		
		Datko's theorem (see \cite[Theorem V.1.8]{EngelNagel}) now implies that $\omega_0(T)<0$.
		
	\end{proof}
	
	\begin{remark}
		By using Corollary \ref{corDom} and a similar argument as in the proof of Theorem \ref{thmMain}, one can also make the proof of $s(A) = \omega_0(T)$ for individually eventually positive semigroups on AM-spaces (see \cite{AroraGlueck2022} for more details) less technical and conceptually much clearer.
		
		Indeed, let $(T_t)_{t\ge 0}$ be an individually eventually positive $C_0$-semigroup on a complex AM-space $E$ with generator $A$. Assume $s(A) < 0$ and fix $f\in E_+$. By individual eventual positivity, $T_t f \ge 0$ for all $t \ge a$ for some $a \ge 0$. Since $E$ is an AM-space, the supremum $g := \sup_{u\in[a,a+1]} T_u f$ exists in $E_+$.
		Set $I := [a,a+1]$ and $Y := \ell^1(I;\mathbb{R})$. The bounded operator $J \colon Y \to E_{\mathbb{R}}$ defined by $Jc := \sum_{u\in I} c_u T_u f$ satisfies $|Jc| \le \norm{c}_1 g$, so $JY \subseteq E_g$.
		Applying Corollary \ref{corDom} to $J$ yields $t_0 \ge a$ and $C \ge 0$ such that, taking canonical unit vectors $c = e_u$, for every $s \ge t_0$ and $u \in [a,a+1]$,
		\[
		|T_s T_u f| \le C T_s g.
		\]
		
		Fix $t \ge t_0 + a + 1$. For $s \in I_t := [t-a-1, t-a]$, we have $s \ge t_0$ and $t-s \in [a,a+1]$, so $|T_t f| = |T_s T_{t-s} f| \le C T_s g$. Integrating over $s \in I_t$ gives
		\[
		|T_t f| \le C \int_{I_t} T_s g \, ds \le C \int_{t_0}^\infty T_s g \, ds =: \widetilde{g}.
		\]
		By \cite[Proposition 7.1]{DanersGlueckKennedy}, the improper Riemann integral $\int_0^\infty T_s g \, ds$ converges in $E$, so $\widetilde{g} \in E_+$. Thus $\sup_{t \ge 0} \norm{T_t f} < \infty$ for all $f \in E_+$. 
		The uniform boundedness principle therefore yields $\sup_{t\ge0}\norm{T_t}<\infty$, so $\omega_0(T)\le0$; applying this to $(e^{-\lambda t}T_t)_{t\ge0}$ for $\lambda>s(A)$ and taking $\lambda\downarrow s(A)$ yields $\omega_0(T)=s(A)$.
	\end{remark}
	
	To conclude, in view of \cite{Arnold2026, ArnoldCoine2023}, it is natural to raise the following question.
	
	\begin{q}
		Let $1 < p < \infty$ and let $(T_t)_{t\ge 0}$ be a Kreiss-bounded, individually eventually positive $C_0$-semigroup on $L^p(\Omega,\mu)$. Does there exist $\varepsilon \in (0,1)$ such that 
		\[
		\norm{T_t} = O(t^{1-\varepsilon}) ?
		\]
	\end{q}
	
	\subsection*{Acknowledgements}
	We are indebted to Jochen Glück for his insightful comments, which significantly contributed to the development of Section 2.

\end{document}